\newtheorem{theorem}{Theorem}[section]
\newtheorem{lemma}[theorem]{Lemma}
\newtheorem{proposition}[theorem]{Proposition}
\theoremstyle{remark}
\theoremstyle{remark}
\numberwithin{equation}{section}
\begin{document}

\title{\textbf{Angle structures and hyperbolic $3$-manifolds with totally geodesic boundary}
\bigskip}

\author{\medskip Faze Zhang, Ruifeng Qiu \& Tian Yang}

\date{}

\maketitle
\begin{abstract}

This notes explores angle structures on ideally triangulated compact $3$-manifolds with high genus boundary. We show that the existence of angle structures implies the existence of a hyperbolic metric with totally geodesic boundary, and conversely each hyperbolic $3$-manifold with totally geodesic boundary has an ideal triangulation that admits angle structures.
\end{abstract}


\section{Introduction}

By Perelman's proof of Thurston's Geometrization Conjecture~\cite{perelman1, perelman2, perelman3}, it is known that the interior of every compact $3$-manifold has a canonical decomposition into geometric pieces, all but a few classified pieces have a unique hyperbolic structure. To calculate the hyperbolic structure on the pieces with torus boundary, Thurston\,\cite{Thurston} introduced a system of algebraic equations, called the hyperbolic gluing equations. The idea is to ideally triangulate the manifold into Euclidean tetrahedra and realize each tetrahedron as a hyperbolic ideal tetrahedron by assigning the shape parameter, a complex number with positive imaginary part. He showed that the shape parameters satisfy the hyperbolic gluing equations; and conversely, a solution to the hyperbolic gluing equations, if exists, gives rise to the hyperbolic structure. In general, the hyperbolic gluing equations are very difficult to solve. Instead of directly solving the hyperbolic gluing equations, Casson\,\cite{Casson} introduced the notion of angle structures which assign dihedral angles to the tetrahedra that satisfy a system of liner equations and strict linear inequalities. He then showed that among all the angle structures, the one that has the maximum volume, if exists, gives rise to a solution to the hyperbolic gluing equations, and hence the hyperbolic structure. See also Lackenby\,\cite{Lackenby1}, Rivin\,\cite{Riv} and Futer-Gu\'eritaud\,\cite{FG}.

Using Thurston's Hyperbolization Theorem\,\cite{Thurston2}, Casson also showed that if an ideally triangulated $3$-manifold with torus boundary admits an angle structure, then it has a hyperbolic structure. See also\,\cite{Lackenby1}. Conversely, a recent work of Hodgson-Rubinstein-Segerman\,\cite{HRS} showed that each cusped hyperbolic $3$-manifold that satisfies some topological condition has an angled triangulation, i.e., an ideal triangulation that admits angle structures. Their construction made an essential use of Epstein-Penner's decomposition of cusped hyperbolic $3$-manifolds\,\cite{EP} and a duality theorem of Luo-Tillmann\,\cite{LT} relating the existence of angle structures and the non-existence of normal surfaces of certain type.

To calculate the hyperbolic structure on the geometric pieces with high genus boundary, Luo\,\cite{Luo1} introduced the corresponding notion of angle structures, which he termed the linear hyperbolic structures. According to \cite{Luo1}, an \emph{angle structure} on an ideally triangulated $3$-manifold $M$ with boundary consisting of surfaces of negative Euler characteristic is an assignment of a positive real number to each edge of each tetrahedron of the triangulation, called the \emph{dihedral angle}, so that the sum of the dihedral angles around each edge is $2\pi,$ and the sum of the dihedral angles at the three edges in a tetrahedron adjacent to a vertex is strictly less than $\pi.$ By the work of Bao-Bonahon\,\cite{BB}, the second condition characterizers the dihedral angles of (truncated) hyperideal tetrahedra  in the $3$-dimensional hyperbolic space $\mathbb H^3.$ Therefore, an angle structure makes each individual tetrahedron in the ideal triangulation a hyperideal tetrahedron in $\mathbb{H}^3.$ The volume of an angle structure is  defined to be the sum of the hyperbolic volume of the hyperideal tetrahedra determined by the assigned dihedral angles. Luo\,\cite{Luo1} showed that the maximum volume angle structure, if exists, gives rise to the hyperbolic metric on $M$ with totally geodesic boundary.

Our main result in this paper proves the counterpart of Casson and Hodgson-Rubinstein-Segerman's results in the setting of angle structures  on $3$-manifolds with high genus boundary. We have

 \begin{theorem} \label{main} Let $M$ be a compact $3$-manifold with boundary consisting of surfaces of negative Euler characteristic. Then the following are equivalent:
 \begin{enumerate}[(1)]
 \item $M$ admits an ideal triangulation that supports angle structures,
\item $M$ admits a hyperbolic metric with totally geodesic boundary.
\end{enumerate}
\end{theorem}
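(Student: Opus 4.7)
The plan is to use Luo's variational principle~\cite{Luo1}. Let $\mathcal{T}$ be the ideal triangulation admitting angle structures. The set $\mathcal{A}(\mathcal{T})$ of angle structures on $\mathcal{T}$ is a non-empty open convex polytope, and the total volume functional $V:\mathcal{A}(\mathcal{T})\to\mathbb{R}$, obtained by summing the volumes of the truncated hyperideal tetrahedra determined by each angle assignment, extends continuously to the compact closure $\overline{\mathcal{A}}(\mathcal{T})$ and hence attains a global maximum. By Luo's theorem, an interior maximum is a critical point of $V$ subject to the edge-sum constraints and gives a solution of the generalized gluing equations, producing the hyperbolic metric with totally geodesic boundary on $M$. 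So the task reduces to showing that the global maximum does not lie on $\partial\mathcal{A}(\mathcal{T})$. A boundary point corresponds to a degeneration of one or more tetrahedra --- either a dihedral angle $\theta_e\to 0$, or the three dihedral angles at some vertex of some tetrahedron sum to $\pi$. Using the Schl\"afli-type differential $dV=-\tfrac{1}{2}\sum_e\ell_e\,d\theta_e$ together with an asymptotic analysis of the truncated edge length $\ell_e$ near each type of degeneration, one computes the one-sided directional derivative of $V$ along any ray entering $\mathcal{A}(\mathcal{T})$ from $\partial\mathcal{A}(\mathcal{T})$ and shows that it is strictly positive, ruling out boundary maxima. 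This adapts the argument of Futer--Gu\'eritaud~\cite{FG} for ideal tetrahedra to the truncated hyperideal setting.

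\textbf{Direction $(2)\Rightarrow(1)$.} The plan is to construct the required triangulation from a canonical polyhedral decomposition of $M$ followed by coning. Lifting the totally geodesic boundary planes of the universal cover $\widetilde{M}\subset\mathbb{H}^3$ to space-like points in the Minkowski model $\mathbb{R}^{3,1}$ and taking the convex hull produces, in direct analogy with the Epstein--Penner decomposition~\cite{EP}, a canonical $\pi_1(M)$-invariant cell decomposition of $\widetilde{M}$ which descends to a decomposition of $M$ into truncated hyperideal polyhedra $P_1,\ldots,P_k$; these have positive dihedral angles summing to $2\pi$ around every edge. Refine this to an ideal triangulation $\mathcal{T}$ of $M$ by first choosing, for each $2$-face of the decomposition, a triangulation by diagonals compatible from both sides, and then coning each $P_i$ from a chosen hyperideal vertex $v_i\in P_i$. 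Generically, every resulting tetrahedron is a non-degenerate truncated hyperideal tetrahedron, and the geometric dihedral angles inherited from the hyperbolic metric give an angle structure on $\mathcal{T}$: the strict vertex-angle inequalities hold tetrahedron-by-tetrahedron, each dihedral angle is positive, and the angles sum to $2\pi$ around every edge because the metric is smooth there. Non-generic degeneracies from the subdivision can be eliminated by a small perturbation of the choices of $v_i$ or of the face diagonals.

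\textbf{Main obstacle.} The principal technical hurdle is the boundary analysis in $(1)\Rightarrow(2)$: one has to classify every way a truncated hyperideal tetrahedron in $\mathcal{T}$ can degenerate along a face of $\overline{\mathcal{A}}(\mathcal{T})$ (a single dihedral angle collapsing, a vertex-truncation triangle shrinking to a point, or combinations thereof), and verify uniformly that the Schl\"afli asymptotics of the associated truncated edge lengths force the directional derivative of $V$ along any inward direction to be strictly positive; a careful case analysis of the Gram-matrix degenerations will be required. In $(2)\Rightarrow(1)$, the comparatively minor issue of arranging a globally consistent choice of face diagonals and of coning vertices that avoid flat tetrahedra can be handled by a direct combinatorial argument together with a generic perturbation.
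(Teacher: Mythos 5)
your volume-maximization route has a genuine gap at the one step you flag as the ``main obstacle,'' and the gap cannot be closed in the way you propose. Luo's theorem only says that an \emph{interior} critical point of the volume functional produces the hyperbolic metric; it does not provide one, and the mere existence of angle structures does not force the maximum of $V$ over the compact closure of the angle polytope to be interior. The blow-up mechanism you appeal to is absent at exactly the dangerous boundary points: when a tetrahedron degenerates to a flat one (two opposite dihedral angles tending to $\pi$, the other four to $0$), the truncated edge lengths in the Schl\"afli differential stay bounded, so the one-sided inward derivative of $V$ is finite and need not be positive (indeed, for a single truncated tetrahedron increasing an angle from $0$ \emph{decreases} its volume). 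Already in the cusped Casson--Rivin/Futer--Gu\'eritaud setting the volume maximum of an angled triangulation can sit on the boundary of the angle polytope --- this is precisely why Casson's proof that angle structures imply hyperbolicity goes through Thurston's Hyperbolization Theorem rather than through volume maximization --- and the same failure mode persists here. Establishing an interior maximum would essentially amount to proving the given triangulation is geometric, a much stronger statement than (2). The paper avoids this entirely: it uses Lackenby's admissible-surface theory and a combinatorial Gauss--Bonnet estimate (Lemma \ref{3.2} and Proposition \ref{thm:3.2}) to show that an angled triangulation forces $M$ to be irreducible, atoroidal and without essential disks (Theorem \ref{thm:3.1}), and then invokes Thurston's Hyperbolization Theorem.

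\textbf{On direction $(2)\Rightarrow(1)$:} your canonical decomposition is in substance the Kojima decomposition the paper uses, but the triangulation step does not work as stated. Coning a hyperideal polyhedron $P_i$ from a vertex $v_i$ is not compatible with an arbitrary prior choice of face diagonals: every face of $P_i$ adjacent to $v_i$ is forced to be triangulated as the cone from $v_i$, and the faces away from $v_i$ are cones from one chosen vertex each; a $2$-face of the decomposition is shared by two polyhedra (possibly by two faces of the same polyhedron), and the two induced cone triangulations in general disagree. There is nothing to perturb: the hyperbolic metric, hence the Kojima polyhedra, are rigid, and the choices of $v_i$ and of diagonals are discrete, so ``generic perturbation'' cannot remove the mismatch, and the obstruction to a globally consistent choice is real, not a non-generic accident. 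This is exactly why the paper (following Hodgson--Rubinstein--Segerman and Lackenby) inserts layered \emph{flat} tetrahedra between the two triangulations of each mismatched face, obtaining only a partially flat angled triangulation with angles in $\{0\}\cup(0,\pi)\cup\{\pi\}$ summing to $2\pi$ around each edge, and then needs a further argument --- Lemma \ref{k} (every edge has a corner with angle in $(0,\pi)$) together with the explicit deformation $\alpha_t$ of Theorem \ref{4.2} --- to perturb the \emph{angles}, not the triangulation, into a strict angle structure. Your proposal is missing both the flat-tetrahedron insertion and this angle-deformation step, and without them the second direction does not go through.
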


We are informed by Lackenby that (1) implying (2) was first known in \cite{Lackenby3}. In comparison with \cite{HRS}, we do not require any topological condition on the manifold. The constructions are inspired by the works of Lackenby\,\cite{Lackenby1,Lackenby2} and Hodgson-Rubinstein-Segerman\,\cite{HRS}. The paper is organized as follows. We recall some basic notions in Section \ref{2}, including hyperideal and flat tetrahedra, angle structures, admissible surfaces and Kojima decompositions. In Section \ref {3} and Section \ref{4}, we respectively prove the two directions of Theorem \ref{main}.\\


\textbf{Acknowledgments:} The first author is supported by a Postgraduate Scholarship Program of China. The authors would like to thank Feng Luo for showing interest and making useful suggestions and Marc Lackenby for bringing our attention to his result in \cite{Lackenby3}. The third author is grateful to Henry Segerman for helpful discussions.

\section{Preliminaries}\label{2}

In this section, we recall some material we need for our results.

\subsection{Hyperideal  tetrahedra}\label{subsection:2.1}

Following Bao-Bonahon\,\cite{BB} and Fuji\,\cite{Fu}, a
\emph{hyperideal tetrahedron} in $\mathbb{H}^3$ is a
compact convex polyhedron that is diffeomorphic to a truncated
tetrahedron in $\mathbb{E}^3$ with four hexagonal faces
right-angled hyperbolic
 hexagons. See Figure \ref{Fig1}. The four hexagonal faces are called the \emph{faces} and the four triangular faces are called the \emph{external faces}. An \emph{edge} in a hyperideal tetrahedron is the
 intersection of two faces, and an \emph{external edge} is the intersection of a
  face and an external face. The \emph{dihedral angle} at an edge is the angle between the two faces adjacent to it. The external faces are isometric to hyperbolic triangles, and the dihedral angle between a face and an external face is $\pi/2.$ By \cite{BB} and \cite{Fu}, a hyperideal tetrahedron in $\mathbb{H}^3$ is determined by its six dihedral angles subject to the constraints that the sum of the dihedral angles at the three edges adjacent to each external face is less than $\pi.$ 

 \begin{figure}[htbp]
\centering
\includegraphics[scale=0.3]{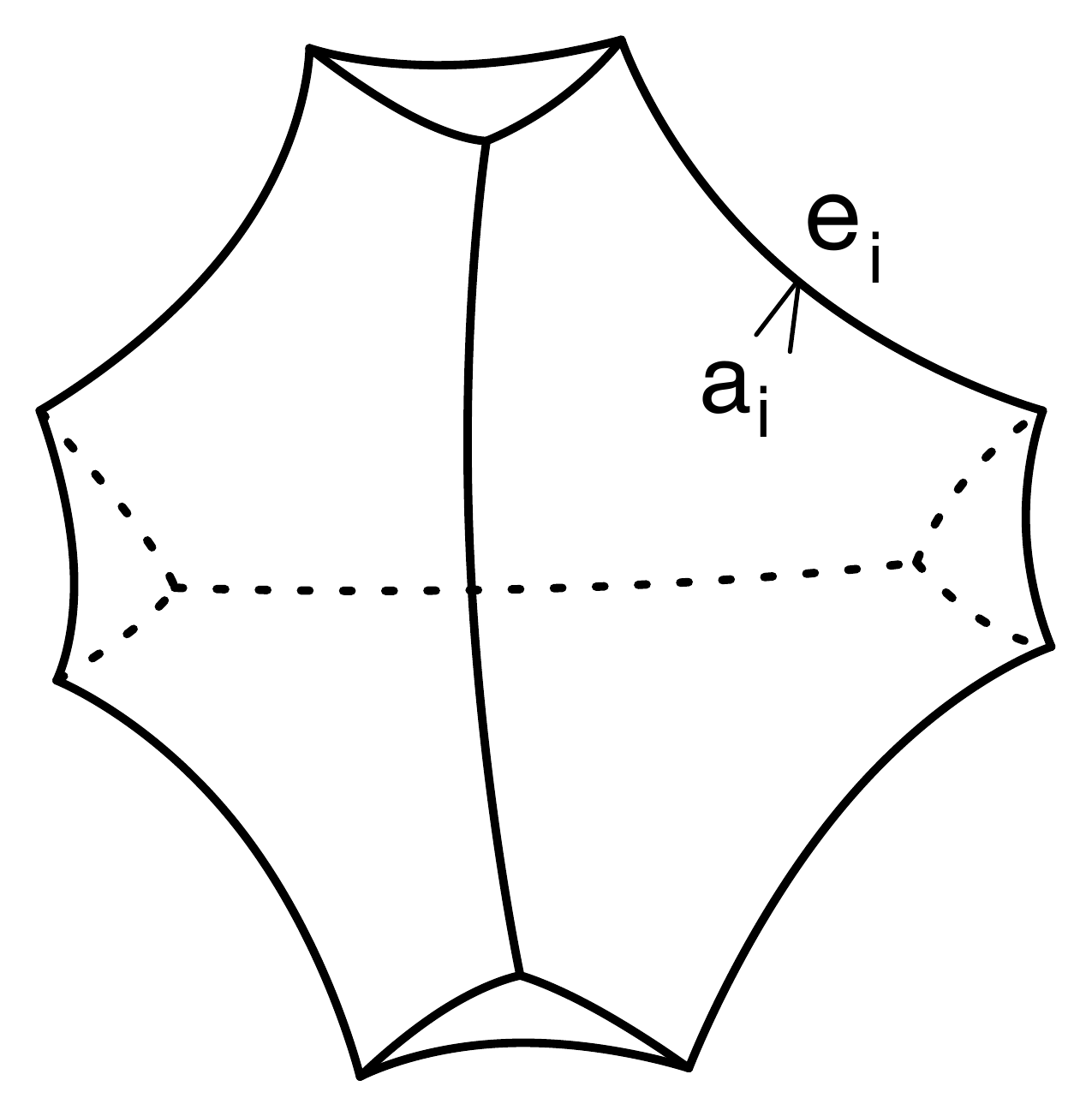}
\caption{Hyperideal tetrahedron}
\label{Fig1}
\end{figure}

\subsection{Flat tetrahedra}

A \emph{flat (hyperideal) tetrahedron} is defined as follows. Let $Q$ be a hyperideal quadrilateral  with edges cyclically labelled as $e_{1},$ $e_{2},$ $e_{3}$ and $e_{4},$ and let $e_{5}$
(resp. $e_{6}$) be the shortest geodesic arc in $Q$ joining
the external edge adjacent to $e_1$ and $e_2$ and the external edge adjacent to $e_3$ and $e_4$ (resp. the shortest geodesic arc in $Q$ joining the external edge adjacent to $e_1$ and $e_4$ and the external edge adjacent to $e_2$ and $e_3$). We call $Q$ with the six
edges $\{e_1,\dots,e_6\}$ a flat tetrahedron. See Figure \ref{Fig4}. The dihedral angles at $e_5$ and $e_6$ are defined to be $\pi$ and are defined to be $0$ at all other edges. 

\begin{figure}[htbp]
\centering
\includegraphics[scale=0.35]{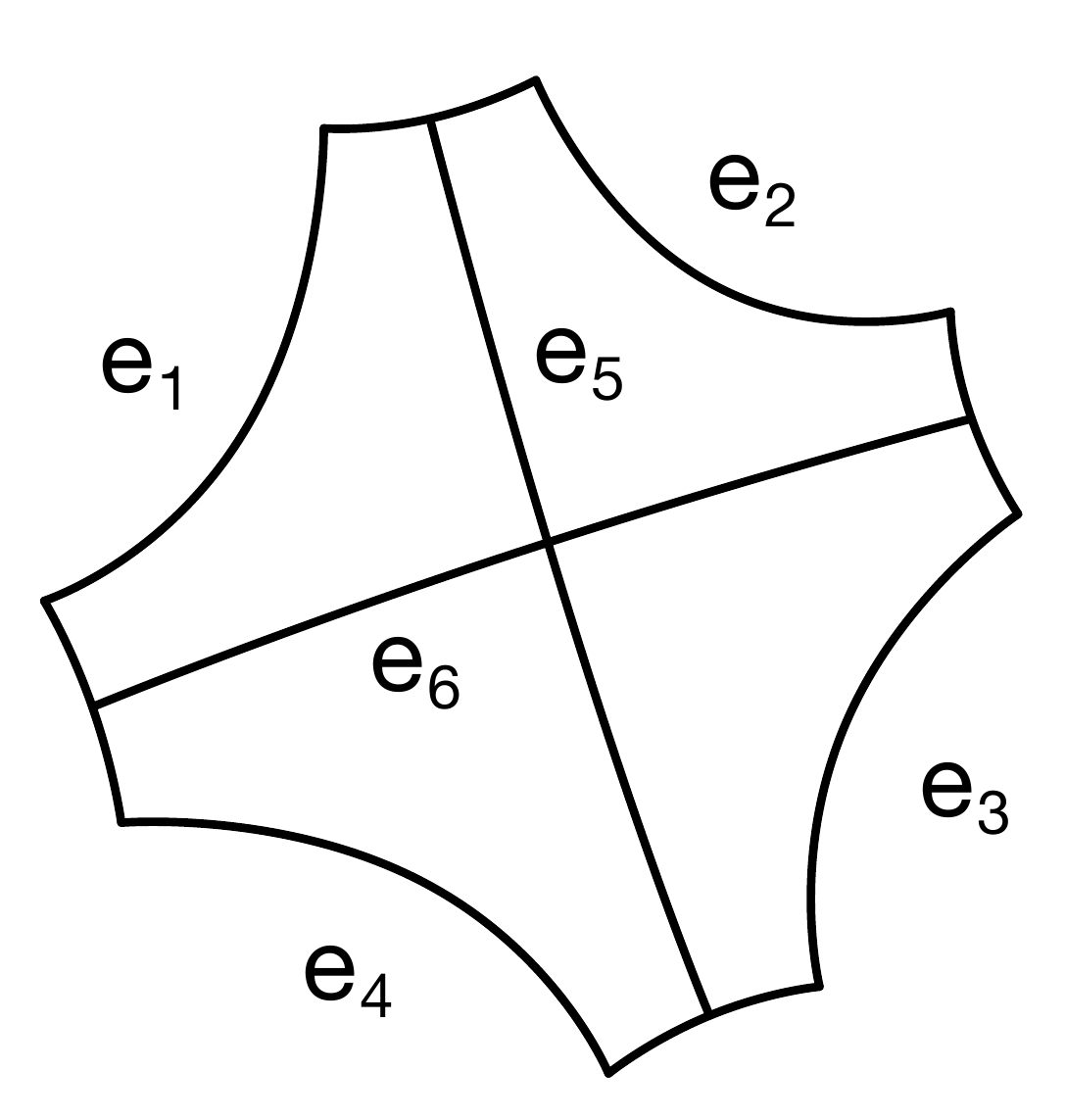}
\caption{Flat tetrahedron}
\label{Fig4}
\end{figure}


\subsection{Ideal triangulations}

Let $M$ be a compact $3$-manifold with boundary consisting of surfaces of negative Euler characteristic. An \emph{ideal triangulation} $\mathcal{T}$ of $M$ consists of a disjoint union $X=\bigsqcup \sigma_i$ of finitely many oriented truncated Euclidean tetrahedra $\sigma_i$ and a collection of orientation reversing affine homeomorphisms $\Phi$ between pairs of hexagonal faces in $X.$ The quotient space $X/\Phi$ is a compact $3$-manifold $M$ with a triangulation $\mathcal T$ where the boundary of $M$ is the quotient of the triangular faces in $X.$ The edges of a truncated Euclidean tetrahedra are the intersection of two hexagonal faces and the external edges are the intersection of a triangular face and a hexagonal face. The \emph{edges} and \emph{external edges} in $\mathcal{T}$ are respectively the quotients of the edges and the external edges in $X,$ the \emph{faces} and the \emph{external faces} in $\mathcal T$ are respectively the quotient of the hexagonal faces and the triangular faces in $X,$ and the \emph{tetrahedra} in $\mathcal{T}$ are the quotients of the truncated Euclidean tetrahedra in $X.$

\subsection{Angle structures}

An \emph{angle structure} on $(M,\mathcal T)$ makes each individual tetrahedron in $\mathcal T$ a hyperideal tetrahedron in $\mathbb{H}^3$ so that the sum of dihedral angles around each edge is $2\pi.$ More precisely, let $E$ and $T$ respectively be the sets of edges and tetrahedra in $\mathcal T$. A pair $(e,\sigma)\in E\times T$ such that $e\subset\sigma$ is called \emph{corner} of $\sigma$ at $e.$ Following Luo\,\cite{Luo1}, an \emph{angle structure} on $(M,\mathcal T)$ is an assignment of a positive real number, called the \emph{dihedral angle}, to each corner so that
\begin{enumerate}[(1)]\label{definition}
\item the sum of the dihedral angles assigned to the corners $(e, \sigma_1),\dots, (e,\sigma_k)$ at each edge $e$ is equal to $2\pi,$ and
\item the sum of the dihedral angles assigned to the corners $(e_1, \sigma),$ $(e_2, \sigma)$ and $(e_3, \sigma)$ at the three edges $e_1,$ $e_2$ and $e_3$ adjacent to each external face of $\sigma$  is less than $\pi.$
\end{enumerate}


\subsection{Admissible surfaces}\label{5}

The proof of that the existence of angle structures implies the existence of hyperbolic structure relies on Lackenby's admissible surface theory for an ideally triangulated $3$-manifold with boundary. Following Lackenby\,\cite{Lackenby1, Lackenby2}, a surface $\Sigma$ in an ideally triangulated $3$-manifold $(M,\mathcal T)$ is \emph{admissible} if for each tetrahedron $\sigma$ in $\mathcal T,$ $\Sigma\cap\sigma$ is a collection of disks embedded in $\sigma,$ called \emph{admissible disks}, such that 

\begin{enumerate}[(1)]
\item  the boundary of each admissible disk is a circle in  the boundary of $\sigma$ that intersects the 1-skeleton of $\partial\sigma,$

\item no arc of intersection between an admissible disk and a face of $\sigma$ has endpoints lying in the same 1-cell of $\partial\sigma$ or in adjacent 1-cells of  $\partial\sigma,$ and

\item no arc of intersection between an admissible disk and an external face of $\sigma$ has endpoints lying in the same 1-cell of $\partial\sigma.$
\end{enumerate}

In \cite{Lackenby1}, Leckenby classified all possible admissible disks that intersect the external faces at most three times, up to isotopy and symmetries of a truncated tetrahedron, as listed in Fig \ref{Fig2}. He also proved that

\begin{figure}[htbp]
\centering
\includegraphics[scale=0.3]{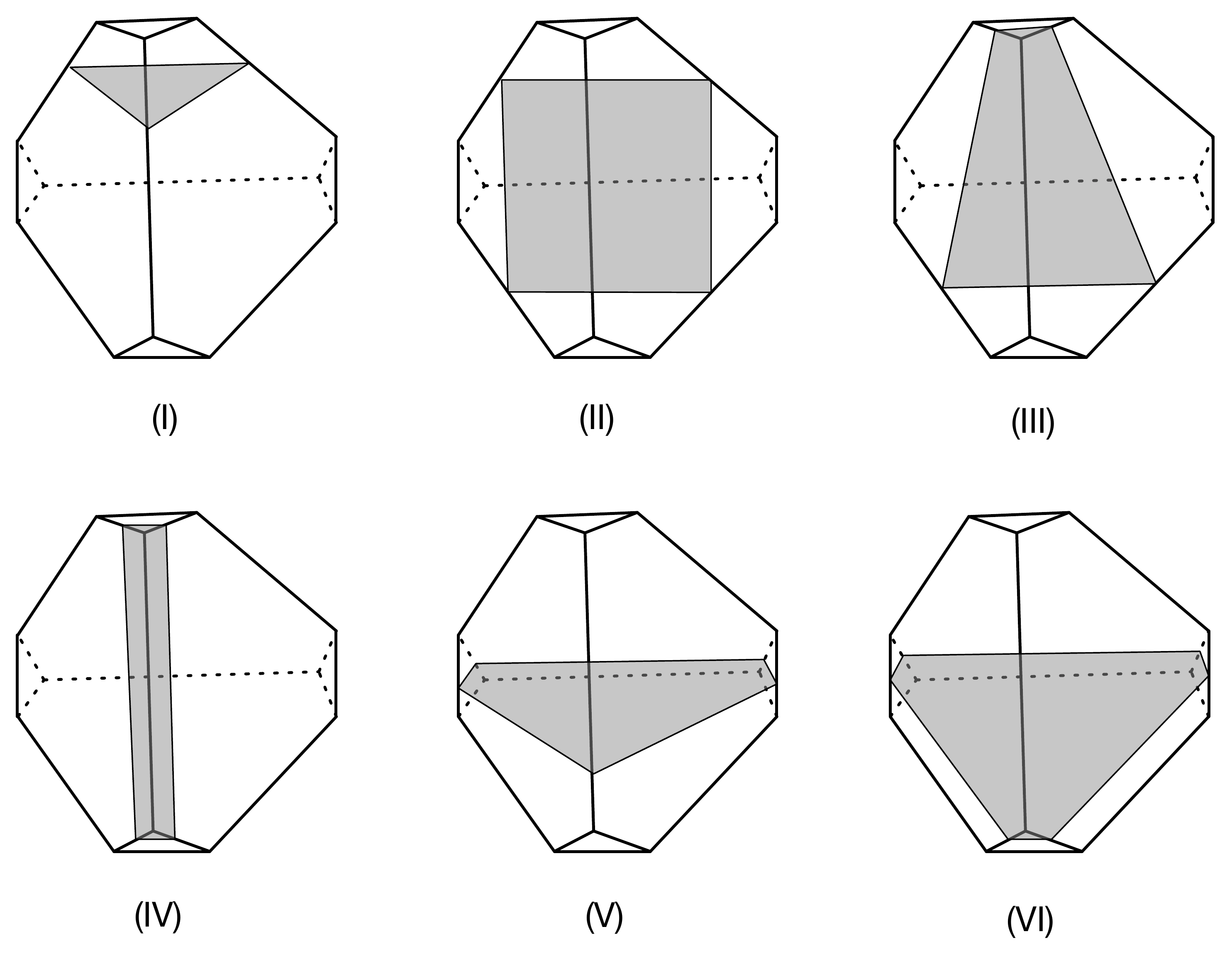}
\caption{Admissible disks}
\label{Fig2}
\end{figure}

\begin{theorem}[Lackenby]\label{thm:Lackenby} If $M$ is a compact irreducible $3$-manifold and $\Sigma$ is a boundary incompressible surface in $M$ that contains no disk component parallel to a disk in $\partial M,$ then $\Sigma$ is homotopic to an immersed admissible surface. 
\end{theorem}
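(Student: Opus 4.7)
The plan is to realize $\Sigma$ as an immersion transverse to the 2-skeleton $\mathcal{T}^{(2)}$ and then run a sequence of simplifying homotopies, each strictly reducing the lexicographic pair $\bigl(\#(\Sigma\cap\mathcal{T}^{(1)}),\,\#(\Sigma\cap\mathcal{T}^{(2)})\bigr)$, until every component of $\Sigma\cap\sigma$ is an admissible disk in each tetrahedron $\sigma$. Once this complexity is minimized, condition (1) is automatic: a boundary component of $\Sigma\cap\sigma$ entirely contained in the interior of a single face or external face could be pushed off to drop the second coordinate.

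First I would put $\Sigma$ in general position and perform the standard clean-up. Any $2$-sphere component of $\Sigma\cap\sigma$ can be capped off by disks in $\sigma$ to form an embedded sphere in $M$, which bounds a ball by irreducibility, and the corresponding homotopy removes the component. Innermost trivial curves of $\Sigma\cap\partial\sigma$ can be eliminated in the same way. After this reduction, I may assume every component of $\Sigma\cap\sigma$ is a disk, and attention passes to violations of (2) and (3).

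To eliminate a violation of (3), I pick an arc $\alpha$ of $\Sigma\cap F$ on an external face $F$ with both endpoints on a single external edge $e$, chosen innermost in $F$. Then $\alpha$ cobounds a disk $D_{0}\subset F\subset\partial M$ with a subarc of $e$, and $D_{0}$ serves as a boundary-compression disk for $\Sigma$. Boundary incompressibility produces a disk in $\Sigma$ co-bounding a ball in $M$ with $D_{0}$, and the homotopy across this ball pushes $\alpha$ off $e$, strictly lowering $\#(\Sigma\cap\mathcal{T}^{(1)})$. For a violation of (2), an innermost offending arc $\beta$ on a hexagonal face $H$ cobounds a disk $D_{0}\subset H$ with a subarc of $\partial H\cap\mathcal{T}^{(1)}$ lying in either one 1-cell or two consecutive 1-cells. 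The same boundary-compression move either lowers $\#(\Sigma\cap\mathcal{T}^{(1)})$ (when the endpoints lie on distinct 1-cells) or, at constant first invariant, strictly lowers $\#(\Sigma\cap\mathcal{T}^{(2)})$ (when both endpoints lie on the same edge of $\sigma$).

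The main obstacle is guaranteeing that the process terminates, which reduces to ruling out the pathological situation where the disk in $\Sigma$ provided by boundary incompressibility is itself a disk component that is parallel into $\partial M$; in that scenario the surgery could reverse itself and the procedure could cycle. This is precisely what the hypothesis that $\Sigma$ has no disk component parallel to a disk in $\partial M$ excludes. Combined with the local character of the homotopies (a push across a disk supported in one face only affects intersections with 1-cells of that face, so it cannot create new violations of lower complexity elsewhere), this yields monotone decrease of the complexity and, at the terminal stage, an immersed admissible surface homotopic to $\Sigma$. A final bookkeeping point is that the classification in Figure~\ref{Fig2} shows the terminal disks intersect each external face at most three times, which is consistent with the complexity-minimizing endpoint of the procedure.
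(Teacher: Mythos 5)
The paper itself does not prove this statement --- it is quoted from Lackenby \cite{Lackenby1} --- so your proposal has to stand on its own, and its central step does not. You assert that, for an offending arc $\alpha$ with the disk $D_0$ cut off in a face, ``boundary incompressibility produces a disk in $\Sigma$ co-bounding a ball in $M$ with $D_0$.'' Boundary-incompressibility only produces a disk $D_1\subset\Sigma$ with $\partial D_1=\alpha\cup\beta$, $\beta\subset\partial\Sigma$; to get a ball you would further need the circle $\beta\cup\delta\subset\partial M$ (where $\delta$ is the subarc of the $1$-cell) to bound a disk in $\partial M$, and then invoke irreducibility. That is boundary-irreducibility of $M$, which is not among the hypotheses and is exactly what Theorem \ref{thm:3.1} uses Theorem \ref{thm:Lackenby} to prove --- so assuming it is circular for the intended application. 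If $\beta\cup\delta$ is essential in $\partial M$, then $D_0\cup D_1$ is a compressing disk for $\partial M$: no ball exists, the isotopy you describe cannot be performed, and even the surgery replacing $D_1$ by a push-off of $D_0$ changes $\partial\Sigma$ by an essential loop in $\partial M$ and hence is not a homotopy of $\Sigma$ at all. Dealing with this obstructed case (for instance by working with a least-weight representative so that an obstructed move itself yields an essential surface of smaller weight, or by carefully isolating exactly which swaps are proper homotopies) is the real content of the theorem, and it is absent from your argument; it is also the reason the conclusion speaks of a surface that is merely \emph{immersed} and only \emph{homotopic} to $\Sigma$, a feature your proof never produces or explains.

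There are also several smaller inaccuracies. An arc violating condition (3) lies in an external face, hence in $\partial M$, so your disk $D_0$ there lies in $\partial M$ and is not a boundary-compression disk; that move is just an isotopy of $\partial\Sigma$ within $\partial M$ across an external edge and needs no hypothesis, whereas the arcs of condition (2) with endpoints on external edges (or on an edge and an adjacent external edge) are the ones that genuinely require input, as above. Eliminating closed curves of $\Sigma\cap\mathcal T^{(2)}$ that bound disks in faces uses incompressibility of $\Sigma$ (not stated in the theorem, though true for the disks and tori it is applied to) together with irreducibility, and pushing the subdisk of $\Sigma$ across the resulting ball may force it through other sheets of $\Sigma$ --- again the source of immersedness, which your ``locality'' remark overlooks. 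Finally, the hypothesis that no disk component is parallel into $\partial M$ is not there to prevent the procedure from ``cycling'' (strict decrease of your non-negative lexicographic complexity already rules out cycles); it is needed because such a component, like a sphere component, would terminate as a piece missing the $1$-skeleton entirely, which is not admissible.
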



\subsection{Kojima decomposition} 

The proof of the existence of angled triangulations makes a use of Kojima's decomposition of a hyperbolic $3$-manifold with totally geodesic boundary into hyperideal polyhedra. A good way to describe hyperideal polyhedra is to use the projective model $\mathbb H^3\subset\mathbb{RP}^3$ of the hyperbolic $3$-space. In this model, an \emph{untruncated hyperideal polyhedron} is the intersection of $\mathbb H^3$ with a projective polyhedron $Q$ in $\mathbb{RP}^3$ whose vertices are all outside $\mathbb H^3$ and whose edges all meet $\mathbb H^3.$ It is known that for each vertex $v$ of $Q,$ there exists a unique totally geodesic plane $H_v$ that is perpendicular to all the faces of $Q$ adjacent to $v,$ and all the $H_v$ are mutually disjoint. Let $V$ be the set of vertices of $Q.$ The closure $P$ of the unique component of $Q\setminus\bigsqcup_{v\in V} H_v$ that lies in $\mathbb H^3$ is called a \emph{hyperideal polyhedron}, and $Q$ is called the \emph{untruncated polyhedron} of $P.$ From the construction, the faces of $P$ consist of the intersections of $P$ with the faces of $Q$ and the intersection of $Q$ with $H_v.$ We respectively call the former the \emph{faces} and the later the \emph{external faces} of $P.$ The faces of $P$ are all right-angled hyperbolic polygons and the external faces of $P$ are all hyperbolic polygons. Note that a hyperideal tetrahedron is a hyperideal polyhedron with its untruncated polyhedron a tetrahedral in $\mathbb{RP}^3.$  In \cite{Kojima}, Kojima proved that 
 \begin{theorem}[Kojima]
 Every hyperbolic $3$-manifold with totally geodesic boundary is a quotient of finitely many hyperideal polyhedra in $\mathbb H^3$ by isometries between pairs of faces.
 \end{theorem}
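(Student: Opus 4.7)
The plan is to adapt the Epstein--Penner convex hull construction for cusped hyperbolic manifolds to the totally geodesic boundary setting. Work in the Minkowski hyperboloid model $\mathbb H^3\subset\mathbb R^{3,1}$. Let $\widetilde M\subset\mathbb H^3$ be the universal cover of $M$, realized as the intersection of a locally finite family of closed half-spaces bounded by totally geodesic planes $\{H_i\}_{i\in I}$. Each such plane $H_i$ is cut out by a unique unit spacelike vector $v_i\in\mathbb R^{3,1}$, where the sign is chosen so that $\widetilde M$ lies on the negative side of $\langle\,\cdot\,,v_i\rangle$. The action of $\pi_1(M)\subset\mathrm O^+(3,1)$ permutes the vectors $\{v_i\}$ equivariantly, and this set is locally finite in $\mathbb R^{3,1}$ because the family $\{H_i\}$ is locally finite in $\mathbb H^3$.

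Next, form the closed convex hull $C$ of $\{v_i\}_{i\in I}$ in $\mathbb R^{3,1}$ and consider its ``visible'' boundary: those faces whose outward normal is past-directed timelike, i.e., the faces one sees looking from the origin toward the hyperboloid. Radial projection from the origin should carry these faces, followed by the projective-model identification with $\mathbb{RP}^3$, onto a locally finite family of convex subsets of $\widetilde M$. For a face $F$ of $C$ with extreme vertices $v_{i_1},\dots,v_{i_k}$, I would verify that the associated projective polyhedron has all vertices outside $\overline{\mathbb H^3}$ and all edges meeting $\mathbb H^3$, so it is an untruncated hyperideal polyhedron in the sense of Section~2.6; after truncation by the perpendicular planes $H_{i_1},\dots,H_{i_k}$, its external faces lie exactly in the corresponding boundary lifts and its remaining faces are the internal codimension-one cells shared with neighbouring projected faces of $C$.

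The tiling is $\pi_1(M)$-equivariant by construction, so it descends to a decomposition of $M$ into hyperideal polyhedra glued by isometries between pairs of faces, these gluings being induced by deck transformations. Finiteness of the quotient decomposition follows from the compactness of $M$ combined with the local finiteness of the tiling of $\widetilde M$.

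The hard part will be the central geometric lemma: that the visible faces of $C$ really do radially project onto cells that tile $\widetilde M$ face-to-face in the claimed hyperideal manner. This rests on two auxiliary facts, each of which uses properness of the $\pi_1(M)$-action: first, that $C$ stays a uniform positive Lorentzian distance from the origin (otherwise visibility would degenerate and faces could fail to project onto convex cells in $\mathbb H^3$); and second, that each ray from the origin through a point of $\widetilde M$ actually meets the visible boundary of $C$ in a single face. Once these are in place, the face-pairing structure and the identification of external faces with $\partial M$ follow from the observation that the projective polar to the linear hyperplane spanned by $v_{i_1},\dots,v_{i_k}$ is precisely the common perpendicular point to $H_{i_1},\dots,H_{i_k}$, matching the external-vertex structure of a hyperideal polyhedron.
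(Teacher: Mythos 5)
The paper does not prove this statement at all: it is imported verbatim from Kojima's paper \cite{Kojima}, so there is no internal proof to compare against. Your strategy --- taking the unit spacelike duals $v_i\in\mathbb R^{3,1}$ of the lifts of the totally geodesic boundary, forming their convex hull, and radially projecting the faces visible from the origin --- is in fact Kojima's own construction, the spacelike analogue of the Epstein--Penner decomposition \cite{EP}, so the approach is the correct one and not a genuinely different route.

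As a proof, however, the proposal has a real gap, and you flag it yourself: the ``central geometric lemma'' that you defer is the entire content of the theorem. Concretely, three things are asserted but not established. First, that the visible part of $\partial C$ is a locally finite polyhedral complex with finite-sided faces; this needs more than local finiteness of $\{v_i\}$ --- it uses discreteness of $\pi_1(M)$ together with compactness of $M$ (equivalently geometric finiteness), and it is also where one rules out degenerate support planes. Second, that each visible face projects to a projective polyhedron all of whose edges meet $\mathbb H^3$; this is equivalent to the planes $H_{i_1},\dots,H_{i_k}$ contributing to one face being pairwise disjoint at positive distance (in particular not asymptotic at infinity), which again rests on compactness of the boundary and is not automatic from the definition of the $v_i$. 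Third, that the projected cells cover $\widetilde M$, have disjoint interiors, and meet face-to-face, so that the quotient really is a gluing of hyperideal polyhedra along isometries of faces; your two ``auxiliary facts'' (that $C$ stays a uniform Lorentzian distance from the origin, and that each ray through a point of $\widetilde M$ meets exactly one visible face) are exactly the statements whose proofs carry this step, and they are only named. Since these points constitute Kojima's argument, what you have is a correct plan rather than a proof; for the purposes of the present paper the theorem is simply cited, so no proof is required here in any case.
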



\section{Angle structures imply hyperbolic structure }\label{3}

This direction was first know in Lackenby\,\cite{Lackenby3}. We include a proof in this section tor the readers convenience. By Thurston's Hyperbolization Theorem\,\cite{Thurston2}, if a compact $3$-manifold with nonempty boundary is irreducible, atoroidal and contains no essential disks, then it has a hyperbolic structure. Therefore, to prove Theorem \ref{main}, it suffices to prove the following

\begin{theorem}\label{thm:3.1} Let $M$ be a compact $3$-manifold with boundary consisting of surfaces of negative Euler characteristic. If there exists an ideal triangulation of $M$ that admits an angle structure, then $M$ is irreducible, atoroidal and contains no essential disks.
\end{theorem}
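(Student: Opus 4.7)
The plan is to argue by contraposition via Lackenby's combinatorial Gauss--Bonnet machinery. Suppose $M$ admits an angle structure but fails at least one of the three conditions. In each case, produce an essential surface $\Sigma \subset M$ with $\chi(\Sigma) \geq 0$: a sphere ($\chi = 2$) if $M$ is reducible; a disk ($\chi = 1$) if $M$ contains an essential disk; or an incompressible, non-boundary-parallel torus ($\chi = 0$) if $M$ is not atoroidal. Because $\partial M$ has negative Euler characteristic, one may arrange $\Sigma$ to be boundary-incompressible with no disk component parallel into $\partial M$, so Theorem \ref{thm:Lackenby} homotopes $\Sigma$ to an immersed admissible surface.

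The heart of the argument is to assign a \emph{combinatorial area} $a(D)$ to each admissible disk $D$ using the angle structure, set $a(\Sigma) = \sum_{D \subset \Sigma} a(D)$, and establish two facts:
\begin{enumerate}[(i)]
\item For every admissible disk type in the classification of Figure \ref{Fig2}, $a(D) \geq 0$, as a consequence of angle structure condition (2).
\item A combinatorial Gauss--Bonnet identity
\[ a(\Sigma) = -2\pi \chi(\Sigma) + R(\Sigma) \]
holds, with $R(\Sigma) \geq 0$, using the angle-sum condition (1) at interior edges and the fact that the dihedral angle along each external edge is $\pi/2$.
\end{enumerate}
Combining (i) and (ii) gives $0 \leq a(\Sigma) = -2\pi\chi(\Sigma) + R(\Sigma)$ with $R(\Sigma) \geq 0$, hence $\chi(\Sigma) \leq 0$, contradicting the sphere and disk cases at once. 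In the torus case one argues that $\Sigma$, being closed and disjoint from $\partial M$, must contain at least one admissible disk of vertex-triangle type cutting off a corner at some external face, for which $a(D) = \pi - (\alpha_1 + \alpha_2 + \alpha_3) > 0$ by the strict inequality in (2); hence $a(\Sigma) > 0$ and so $\chi(\Sigma) < 0$, again a contradiction.

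The main technical obstacle is verifying (i) and (ii), which requires a case analysis over the admissible disk types in Figure \ref{Fig2}. The correct formula for $a(D)$ attaches to each arc of $\partial D \cap \partial\sigma$ crossing an edge of the triangulation a contribution determined by the dihedral angle at the corresponding corner, and to each arc crossing an external edge a contribution of $\pi/2$; the definition is engineered so that when we sum over the disks meeting a fixed edge the contributions telescope via condition (1) into the claimed Euler characteristic identity. The torus case is the most delicate, since one must rule out the possibility that $\Sigma$ consists only of quadrilateral-type admissible disks in the interior of $M$; this is handled by a parity or homological argument using that $\Sigma$ is closed with $\chi = 0$ and not boundary-parallel.
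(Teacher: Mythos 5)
Your overall strategy---assigning a combinatorial area to each admissible disk and deriving $\chi(\Sigma)\leqslant 0$ from the two conditions of an angle structure---is exactly the paper's route (Lemma \ref{3.2} and Proposition \ref{thm:3.2}). But there are two genuine gaps. The first is the sphere case, which as written is circular: Theorem \ref{thm:Lackenby} has as a hypothesis that $M$ is irreducible, which is precisely one of the conclusions you are trying to prove, and an essential sphere is in any case not a boundary-incompressible surface of the kind that theorem normalizes. The paper avoids this by using normal surface theory instead: if $M$ is reducible, it contains a normal $2$-sphere $S$ with respect to $\mathcal T$ (normal disks are admissible), and the area estimate then gives $\chi(S)<0$, a contradiction. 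Some such substitute is needed for the irreducibility statement; Lackenby's theorem cannot be invoked there.

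The second gap is the torus case, which rests on an unproved claim: you assert that a closed admissible surface must contain a vertex-triangle disk of strictly positive area, and defer this to a ``parity or homological argument'' that is never supplied; moreover the claim targets a non-issue. A closed admissible surface is disjoint from $\partial M$, so all of its corners are internal and its disks are of types (I) and (II) of Figure \ref{Fig2}; and for a type (II) quadrilateral the four inner angles already sum to strictly less than $2\pi$, because summing the angle condition over the four external faces of a hyperideal tetrahedron shows that all six dihedral angles of one tetrahedron sum to less than $2\pi$. Hence every disk of a closed admissible surface has strictly positive area and $\chi(\Sigma)<0$ outright; equivalently, as in Lemma \ref{3.2}(3) and Proposition \ref{thm:3.2}, the only zero-area disks are the quadrilaterals with four external corners (type (IV)), and these force $\Sigma$ to meet $\partial M$, which is impossible for a closed torus since every edge of $\mathcal T$ has its endpoints on $\partial M$. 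A smaller point in the same direction: your step (i) verifies $a(D)\geqslant 0$ only for the disk types in Figure \ref{Fig2}, but that figure classifies only admissible disks meeting the external faces at most three times; a general admissible disk needs the separate estimate that it has at least four external corners contributing $2\pi$, with each remaining corner contributing at most $\pi$, as in Case 2 of the paper's Lemma \ref{3.2}.
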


Suppose $(M,\mathcal T)$ is an ideally triangulated compact $3$-manifold with boundary consisting of surfaces of negative Euler characteristics and $\Sigma$ is an admissible surface in $(M,\mathcal T).$ Then $\mathcal T$ induces a cell decomposition $\mathcal D$ of $\Sigma$ with $0$-cells the intersections of $\Sigma$ with the edges and the external edges of $\mathcal T,$ $1$-cells the intersections of $\Sigma$ with the faces and the external faces of $\mathcal T$ and $2$-cells the admissible disks of $\Sigma.$ We let $V$ and $F$ respectively be the set of $0$-cells and $2$-cells of $\mathcal D,$ and call a pair $(v, D)\in V\times F$ such that $v\in D$ a \emph{corner} of $D$ at $v.$ We call a $0$-cell of $\mathcal D$ \emph{external} if it is the intersection of $\Sigma$ with an external edge of $\mathcal T,$ and \emph{internal} if otherwise. A corner $(v,D)$ is \emph{external} if $v$ is external, and is \emph{internal} if otherwise. Note that each internal corner $(v,D)$ in $\mathcal D$ corresponds to a unique corner $(e,\sigma)$ in $\mathcal T$ such that $v\in\Sigma\cap e$ and $\sigma\subset\Sigma\cap \sigma.$ An angle structure $\alpha$ on $(M,\mathcal T)$ induces an assignment $\theta$ of positive real numbers, called the \emph{inner angles}, to the corners in $\mathcal D$ by letting $\theta(v,D)=\alpha(e,\sigma)$ if $(v,D)$ is internal, and letting $\theta(v,D)=\pi/2$ if $(v,D)$ is external.

\begin{lemma}\label{3.2}
\begin{enumerate}[(1)]
\item the sum of the inner angles $\theta(v,D_i),$ $i=1,\dots,k,$ assigned to all the corners at an inner $0$-cell $v$ equals $2\pi,$ 
\item the sum of the inner angles $\theta(v_i,D),$ $i=1,\dots, k,$ assigned to all the corners of a admissible disk $D$ with $k$ edges is less than or equal to $(k-2)\pi,$ and
\item  the equality in (2) holds if and only if $D$ is a quadrilateral with four external corners, i.e., (IV) of Figure \ref{Fig2}.

\end{enumerate}
\end{lemma}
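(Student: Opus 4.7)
Part (1) is essentially a tautology. An internal $0$-cell $v$ of $\mathcal D$ is a transverse crossing of $\Sigma$ with a unique edge $e$ of $\mathcal T$, and the admissible disks $D_1,\dots,D_k$ meeting $v$ are in natural bijection with the tetrahedra $\sigma_1,\dots,\sigma_k$ of $\mathcal T$ containing $e$ (each $D_i$ being the component of $\Sigma\cap\sigma_i$ through $v$). By definition $\theta(v,D_i)=\alpha(e,\sigma_i)$, and condition~(1) of the angle structure yields $\sum_{i}\theta(v,D_i)=\sum_{i}\alpha(e,\sigma_i)=2\pi$.

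For parts (2) and (3), my plan is to go case by case through Lackenby's classification of admissible disks in Figure~\ref{Fig2}. Fix $D\subset\sigma$, write $k_i$ and $k_e$ for the numbers of internal and external corners (so $k=k_i+k_e$), and observe that the target reads
\[
\sum_{j=1}^{k}\theta(v_j,D)=\sum_{\text{internal}}\alpha(e_j,\sigma)+k_e\cdot\frac{\pi}{2}\;\le\;(k_i+k_e-2)\pi.
\]
This is to be produced by a suitable linear combination of the external-face constraints $S_F:=\alpha(e_1,\sigma)+\alpha(e_2,\sigma)+\alpha(e_3,\sigma)<\pi$, where $e_1,e_2,e_3$ are the three edges of $\sigma$ adjacent to an external face $F$ (condition~(2) of the angle structure).

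For type~(I) (a normal triangle, $k_i=3,k_e=0$): the three corners lie on the three edges adjacent to a single external face $F$, so the bound is exactly $S_F<\pi=(k-2)\pi$. For type~(II) (a normal quadrilateral, $k_i=4,k_e=0$): the four corners lie on the four edges of $\sigma$ complementary to one pair $\{e',e''\}$ of opposite edges; adding $S_{F_1}+S_{F_2}$ for the two external faces at the endpoints of $e'$ counts $\alpha(e',\sigma)$ twice and each corner edge once, giving $2\alpha(e',\sigma)+\sum_{\text{corners}}\alpha<2\pi$ and hence $\sum_{\text{corners}}\alpha<2\pi-2\alpha(e',\sigma)<2\pi=(k-2)\pi$. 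For type~(IV) (a quadrilateral with $k_e=4$): the sum is $4\cdot\pi/2=2\pi=(k-2)\pi$, yielding the equality asserted in~(3). For the remaining mixed types in Figure~\ref{Fig2}, each external corner lies on an external edge adjacent to a unique external face of $\sigma$, and summing condition~(2) at the external faces attached to $D$ absorbs the $k_e\cdot\pi/2$ contribution while positivity of the dihedral angles not covered by the sum supplies the strict slack.

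The main obstacle is this last case-by-case bookkeeping: one must verify, for each configuration in Figure~\ref{Fig2} other than~(IV), that some choice of external faces makes the corresponding $S_F$-sum dominate $\sum\alpha+k_e\pi/2$ with a positive residual from uncounted dihedral angles. A cleaner uniform alternative would be to realize $\sigma$ as a hyperideal tetrahedron via the angle structure and straighten $D$ into a piecewise-geodesic normal representative perpendicular to each crossed edge; hyperbolic Gauss--Bonnet would then yield $\sum_j\theta(v_j,D)=(k-2)\pi-\mathrm{Area}(D)\le(k-2)\pi$, with equality iff the representative is degenerate, which one would then identify with the equatorial type~(IV). That geometric route, however, requires a nontrivial existence and nondegeneracy argument for the straightened disk, so the elementary case analysis seems preferable.
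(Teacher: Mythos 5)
Your part (1) matches the paper, and your computations for types (I), (II) and (IV) are correct (your type (II) bound via $S_{F_1}+S_{F_2}=2\alpha(e',\sigma)+\sum_{\text{corners}}\alpha<2\pi$ is a fine variant of the paper's observation that all six dihedral angles of a tetrahedron sum to less than $2\pi$). But the overall strategy has a genuine gap: you propose to prove (2) and (3) by an exhaustive case-by-case check of Figure \ref{Fig2}, yet that figure is \emph{not} a complete list of admissible disks --- it only classifies those intersecting the external faces at most three times. The lemma is stated (and is needed, in the Euler characteristic count of Proposition \ref{thm:3.2}, which sums over disks with up to $n$ edges) for admissible disks with arbitrarily many corners, and these unclassified disks are exactly the ones your ``remaining mixed types'' paragraph never reaches. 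Moreover, the mechanism you sketch there --- ``summing condition (2) at the external faces attached to $D$ absorbs the $k_e\cdot\pi/2$ contribution'' --- is not a precise inequality: condition (2) bounds sums of internal dihedral angles and does not by itself offset the external $\pi/2$ contributions.

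The paper avoids this by splitting differently: the classification is used only when $D$ meets at most one external face (types I, II, III, where the face inequalities are genuinely needed), while every disk meeting at least two external faces is handled by a uniform counting argument with no classification at all: each external arc contributes two external corners, so there are at least four of them summing to $2\pi$, and every remaining corner (internal or external) has angle at most $\pi$ (internal angles are $<\pi$ since each appears in a face sum $<\pi$ with positive companions), giving $\sum\theta\leqslant 2\pi+(k-4)\pi=(k-2)\pi$, with equality forcing $k=4$ and all corners external, i.e.\ type IV. If you add this counting step in place of the case-by-case bookkeeping for disks with two or more external arcs, your argument closes; note also that for exactly one external arc the bound is already automatic from ``each internal angle $<\pi$'', so the face inequalities are only essential for the normal triangle and quadrilateral. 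Your Gauss--Bonnet alternative would indeed require a nontrivial straightening/nondegeneracy argument and is rightly set aside.
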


\begin{proof}
For (1), we let $e$ be the edge of $\mathcal T$ such that $v\in\Sigma\cap e,$ and let $\sigma_i,$ $i=1,\dots,k,$ be the tetrahedron in $\mathcal T$ such that $D_i\subset\Sigma\cap\sigma_i.$ Then $\sum_{i=1}^k\theta(v,D_i)=\sum_{i=1}^k\alpha(e,\sigma_i)=2\pi.$ For (2), we consider the following two cases. 

Case 1: If $D$ intersect at most one external faces of $\sigma,$ then all the possibilities are listed in ((I), (II) and (III) of) Figure \ref{Fig2}. We respectively call the admissible disks in (I), (II) and (III) of Figure \ref{Fig2} of type I, II and III. Let $\sigma$ be the tetrahedron in $\mathcal T$ such that $D=\Sigma\cap\sigma,$ and let $\alpha(e_i,\sigma),$ $i=1,\dots,6,$ be the dihedral angles assigned by $\alpha$ to the corners $(e_i, \sigma).$ In each case, by renaming the edges, we may assume that $\theta(v_i,D)=\alpha(e_i,\sigma).$ If $D$ is of type I, then $\sum_{i=1}^3\theta(v_i,D)=\sum_{i=1}^3\alpha(e_i,\sigma)<\pi.$ If $D$ is of type II, then $\sum_{i=1}^4\theta(v_i,D)=\sum_{i=1}^4\alpha(e_i,\sigma)<\sum_{i=1}^6\alpha(e_i,\alpha)<2\pi.$ Finally suppose $D$ is of type III with $(v_1,D)$ and $(v_2,D)$ the two internal corners and $(v_3,D)$ and $(v_4,D)$ the two external corners. Then $\theta(v_1,D)+\theta(v_2,D)<\pi$ and $\theta(v_3,D)+\theta(v_4,D)=\pi,$ and hence $\sum_{i=1}^4\theta(v_i,D)<2\pi.$ 

 Case 2: If $D$ intersects at least two external faces of $\sigma,$ then it contains at least four external corners, say $(v_1,D),\dots, (v_4,D),$ whose inner angles add up to $2\pi.$ Thus, $\sum_{i=1}^k\theta(v_i,D)=2\pi+\sum_{i=5}^k\theta(v_i,D)\leqslant 2\pi+(k-4)\pi=(k-2)\pi.$

For (3), we call the admissible disk in (IV) of Figure \ref{Fig2} of type IV. From Case 1 and 2 above, we see that the equality holds if and only if $D$ contains at least four external corners and $k-4=0,$ which is exactly of type IV.\end{proof}

\begin{proposition}\label{thm:3.2} Suppose $(M,\mathcal T)$ is an ideally triangulated $3$-manifold with boundary consisting of surfaces with negative Euler characteristic and $\Sigma$ is an admissible surface in $(M,\mathcal T).$ If $\mathcal T$ admits angle structures, then the Euler characteristic $\chi(\Sigma)\leqslant0,$ and the equality holds if and only if $\Sigma$ is the closure of the intersection of the interior of $M$ with the boundary of a tubular neighborhood  of some edges of $\mathcal T.$ 
\end{proposition}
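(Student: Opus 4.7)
The plan is a discrete Gauss--Bonnet argument on the cell decomposition $\mathcal D$ of $\Sigma$, coupling the per-disk angle-sum bound in Lemma \ref{3.2}(2) with the per-vertex angle-sum equality in Lemma \ref{3.2}(1). Let $V_i, V_e, E_i, E_b, F$ denote, respectively, the numbers of internal $0$-cells, external $0$-cells, $1$-cells interior to $\Sigma$ (which lie on hexagonal faces of $\mathcal T$), $1$-cells on $\partial\Sigma$ (which lie on external faces of $\mathcal T$), and $2$-cells of $\mathcal D$. Since $\partial\Sigma$ is a closed $1$-manifold, $V_e = E_b$, so $\chi(\Sigma) = V_i - E_i + F$. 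Each interior $1$-cell is shared by exactly two admissible disks while each boundary $1$-cell belongs to a single disk, hence $\sum_D k_D = 2E_i + E_b$, where $k_D$ denotes the number of edges of the disk $D$.

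Next I would sum Lemma \ref{3.2}(2) over all admissible disks and exchange the order of summation to obtain
$$\sum_{v}\sum_{D\ni v}\theta(v,D)\;\leq\;\sum_D(k_D-2)\pi\;=\;\pi(2E_i+E_b)-2\pi F.$$
The left-hand side is then evaluated vertex by vertex. Each internal $0$-cell contributes $2\pi$ by Lemma \ref{3.2}(1). At an external $0$-cell $v$, lying on some external edge $e'$ of $\mathcal T$, the key observation is that $e'$ is shared by exactly two tetrahedra: the two external faces of $\partial M$ adjacent to $e'$ must lie in two distinct tetrahedra, since the four external faces of any single truncated tetrahedron are pairwise disjoint. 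Transversality of $\Sigma$ to $\partial M$ then forces exactly two admissible disks to meet at $v$, each contributing an external corner of angle $\pi/2$, so $\sum_{D\ni v}\theta(v,D)=\pi$. Substituting and using $V_e=E_b$ collapses the inequality to $V_i+F\leq E_i$, i.e., $\chi(\Sigma)\leq 0$.

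For the equality statement, $\chi(\Sigma)=0$ forces Lemma \ref{3.2}(2) to be tight on every disk, so by Lemma \ref{3.2}(3) every admissible disk is a type IV quadrilateral, i.e., a rectangle inside its tetrahedron encircling one of its internal edges. Consecutive such rectangles in neighboring tetrahedra match up across the hexagonal-face gluings of $\mathcal T$, so the type IV rectangles assemble into annuli encircling internal edges of $\mathcal T$; each such annulus is precisely the closure of the intersection of the interior of $M$ with the boundary of a tubular neighborhood of the corresponding edge, as claimed. The main subtlety is the external-vertex angle count, which rests on the combinatorial fact that each external edge of $\mathcal T$ belongs to exactly two tetrahedra; once that is in hand, the remaining Euler-characteristic bookkeeping and the identification of the equality case are routine.
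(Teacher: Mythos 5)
Your argument is correct and is essentially the paper's proof: a combinatorial Gauss--Bonnet count on the induced cell decomposition using Lemma \ref{3.2}, the only difference being that you handle $\partial\Sigma$ by direct bookkeeping ($V_e=E_b$ and angle sum $\pi$ at external $0$-cells) where the paper doubles $(\Sigma,\mathcal D)$ to reduce to the closed case. One small caveat: your claim that the two tetrahedra meeting an external edge are \emph{distinct} need not hold (a tetrahedron may be glued to itself), but this is harmless since all you need --- and all the paper uses --- is that exactly two corners, each of angle $\pi/2$, occur at each external $0$-cell, which follows from the face pairings regardless of whether the two adjacent tetrahedra coincide.
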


\begin{proof} Let $\theta$ be the assignment of inner angles to the corners in $\mathcal D$ induced by an angle structure on $(M,\mathcal T),$ and let $V,$ $E,$ $F$ and $F_k,$ $k=1,\dots,n,$ respectively be the set of $0$-cells, $1$-cells, admissible disks and admissible disks with $k$ edges in the cell decomposition $\mathcal D$ of $\Sigma$ induced by $\mathcal T.$ Here $n$ is the maximum number of edges that an admissible disk in $\mathcal D$ has. Then $|F|=\sum_{k=3}^n|F_k|$ and $2|E|=\sum_{k=3}^nk|F_k|.$  If $\Sigma$ is a closed surface, then $\chi(\Sigma)=|V|-\sum_{k=3}^n\frac{k-2}{2}|F_{k}|$. We have
\begin{equation*}
\begin{split}
2\pi\chi (\Sigma)&=2\pi\big(|V|-\sum_{k=3}^n\frac{k-2}{2}|F_{k}|\big)\\
&=\sum_{v\in V}2\pi-\sum_{k=3}^n\sum_{D\in F_k}(k-2)\pi\\
&=\sum_{\{(v,D)|v\in D\}}\theta(v,D)-\sum_{k=3}^n\sum_{D\in F_k}(k-2)\pi\\
&=\sum_{k=3}^n\sum_{D\in F_k}\big(\sum_{v\in D}\theta(v,D)-(k-2)\pi\big)<0,
\end{split}
\end{equation*}
where the last inequality is by Lemma \ref{3.2}.
If $\Sigma$ has nonempty boundary, then we consider the double $(\widetilde{\Sigma},\widetilde{\mathcal D})$ of $(\Sigma,\mathcal D).$ Since the inner angle at an external corner of $\mathcal D$ is defined to be $\pi/2,$ and at each external $0$-cell of $\mathcal D$ there are exactly two conners, the sum of inner angles at each external $0$-cell in ${\mathcal D}$ is $\pi.$ Thus,  the sum of inner angles at each $0$-cell in $\widetilde{\mathcal D}$ is $2\pi.$ By Lemma \ref{3.2} and the same calculation above, $\chi(\widetilde\Sigma)\leqslant 0,$ and the equality holds if and only if all the admissible disks are of type IV, i.e., (IV) in Figure \ref{Fig2}. Therefore, $\chi(\Sigma)=\chi(\widetilde\Sigma)/2\leqslant0,$ and the equality holds if and only if $\Sigma$ is a union of admissible disks of type IV, i.e., the closure of the intersection of the interior of $M$ with the boundary of a tubular neighborhood of some edges in $\mathcal T.$
\end{proof}

\begin{proof}[Proof of Theorem \ref{thm:3.1}] Let $\mathcal T$ be an ideal triangulation of $M$ that admits angle structures. Suppose $M$ contains an essential $2$-sphere, then $M$ contains a normal $2$-sphere $S$ with respect to $\mathcal T.$ By Proposition \ref{3.2}, $\chi(S)<0,$ which is a contradiction. Hence $M$ is irreducible. Now suppose $M$ contains essential tori or disks. Then by Theorem \ref{thm:Lackenby}, $M$ contains admissible tori or disks. By Proposition \ref{3.2}, those admissible surfaces have negative Euler characteristic, which is a contradiction.
\end{proof}


\section{Hyperbolic structure implies angled triangulation}\label{4}

Let $M$ be a hyperbolic 3-manifold with totally geodesic boundary. Our approach of finding angled triangulations consists of the following two steps. In the first step we construct an ideal triangulation $\mathcal T$ of $M$ by subdividing the hyperideal polyhedra in a Kojima decomposition of $M$ into hyperideal tetrahedra, and inserting flat tetrahedra between pairs of faces of the polyhedra. The triangulation inherits non-negative dihedral angles because each tetrahedra in $\mathcal T$ is either hyperideal or flat, so that the sum of dihedral angles around each edge equals $2\pi.$ This construction is combinatorially the same as that in Hodgson-Rubinstein-Segerman\,\cite{HRS} and Lackenby\,\cite{Lackenby3}. In the second step we obtain an angle structure by deforming those non-negative dihedral angles of $\mathcal T$ into positive angles.


\subsection{Existence of partially flat angled triangulations}

In this subsection, we describe Hodgson-Rubinstein-Segerman and Leckenby's algorithm of constructing an ideal triangulation of $M$ from a Kojima decomposition.

We use the projective model $\mathbb H^3\subset \mathbb {RP}^3$ of the hyperbolic $3$-space. As a convention, all the polyhedra and polygons in this subsection will respectively mean untruncated hyperideal polyhedra and untruncated hyperideal polygons.  Recall that a polyhedron is a \emph{pyramid} if its faces consist of an $n$-gon and $n$ triangles which are the cone of the boundary of the $n$-gon to a vertex $v\in\mathbb{RP}^3.$ The vertex $v$ and the $n$-gon are respectively called the \emph{tip} and the \emph{base} of the pyramid.  Let $\mathcal P$ be a Kojima decomposition of $M.$ For each hyperideal polyhedron $P$ in $\mathcal P,$ we let $Q\subset\mathbb {RP}^3$ be the untruncated polyhedron of $P,$ and we arbitrarily pick a vertex $v$ of $Q.$ By taking cone at $v,$ one gets a decomposition of $Q$  into pyramids with tips the vertex $v$ and bases the faces of $Q$ disjoint from $v.$  For the base $D$ of a pyramid, we arbitrarily pick a vertex $w$ of $D$ and decompose $D$ into triangles by taking cone at $w.$ The decomposition of $D$ extends to a decomposition of the pyramid into tetrahedra. In this way, $Q$ is decomposed into a union of tetrahedra. In turns, the intersections of $P$ with those tetrahedra give rise to a decomposition of $P$ into hyperideal tetrahedra. By the construction, each face of $\mathcal P$ is decomposed into hyperideal triangles. In general, the decompositions of a face from two different polyhedra adjacent to it do not always match. In this situation, we insert flat tetrahedra to get a Layered triangulation as follows. Let $v$ and $v'$ respectively be the vertices of a face $D$ where the cone is taken at from the two different polyhedra, and let $u_1,\dots,u_i$ and $w_1,\dots,w_j$ respectively the other vertices of $D$ in the order along the boundary of $D$ from $v$ to $v'.$ See Figure \ref{Fig3}. For each diagonal switch from $u_kv'$ to $u_{k+1}v,$ $k\in\{1,\dots, i-1\},$ and each diagonal switch from $w_kv'$ to $w_{k+1}v,$ $k\in\{1,\dots,j-1\},$ we respectively insert a flat tetrahedron with vertices $\{v, u_i,u_{i+1},v'\}$ and a flat tetrahedron with vertices $\{v,w_k,w_{k+1},v'\}.$

 \begin{figure}[htbp]
\centering
\includegraphics[scale=0.3]{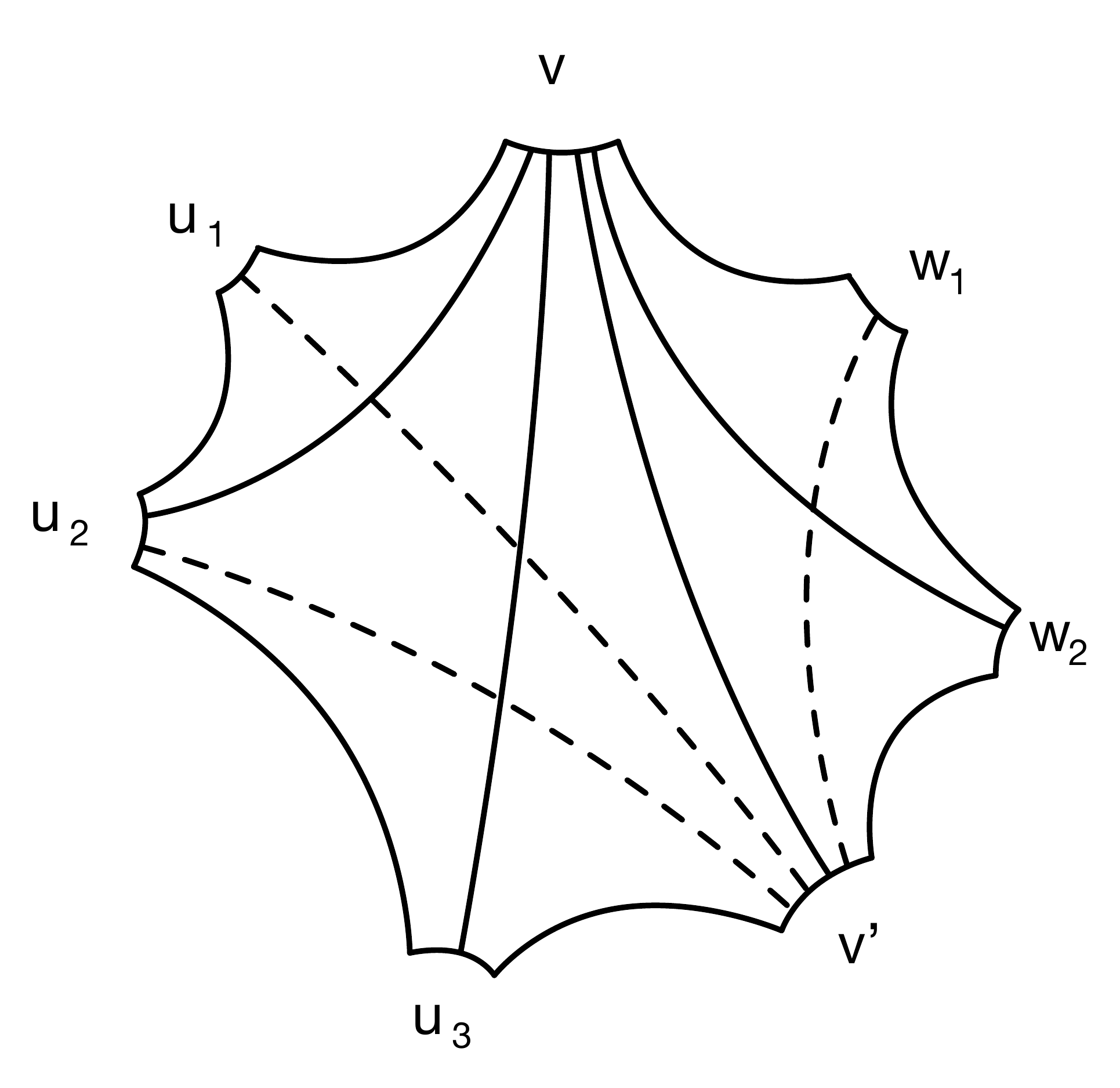}
\caption{Layered triangulation. There are three flat tetrahedra in this Figure respectively with the sets of vertices $\{v,u_1,u_2,v'\},$ $\{v,u_2,u_3,v'\}$ and $\{v,w_1,w_2,v'\}.$
}
\label{Fig3}
\end{figure}

In this way, we get an ideal triangulation $\mathcal T$ of $M$ with tetrahedra the hyperideal tetrahedra obtained by subdividing the hyperideal polyhedra and the flat tetrahedra corresponding to the diagonal switches. From the construction, each corner $(e,\sigma)$ in $\mathcal T$ inherits a non-negative dihedral angle $\beta(e,\sigma)$ so that $\beta(e,\sigma)\in(0,\pi)$ if $\sigma$ is a hyperideal tetrahedron and $\beta(e,\sigma)=0$ or $\pi$ if $\sigma$ is flat. Since no new edges were introduced by doing diagonal switches, each edge in $\mathcal T$ is adjacent to at least one hyperideal tetrahedron. As a consequence, we have

\begin{lemma}\label{k} For each $e\in E,$ there is at least one corner $(e,\sigma)$ such that $\beta(e,\sigma)\in(0,\pi).$
\end{lemma}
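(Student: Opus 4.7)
The plan is to verify that every edge $e\in E$ appears as an edge of some hyperideal tetrahedron in the subdivision of one of the Kojima polyhedra. Since the dihedral angle $\beta(e,\sigma)$ assigned to any corner of a hyperideal tetrahedron $\sigma$ lies in $(0,\pi)$ by the definition in Subsection \ref{subsection:2.1}, this immediately yields the lemma.

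First I would partition the edges of $\mathcal{T}$ into two types according to the construction in Section \ref{4}: (i) edges produced by the pyramid-and-cone subdivision of some polyhedron $P\in\mathcal{P}$ (these include the edges of $P$ itself, the spokes from the polyhedron-cone vertex, and the spokes from each base-cone vertex), and (ii) edges occurring in the inserted flat tetrahedra along a shared face $D$. Type (i) edges are, by construction, edges of some hyperideal tetrahedron in the subdivision of $P$, so those edges cause no trouble.

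For type (ii), I would inspect a single flat tetrahedron with vertex set $\{v,u_k,u_{k+1},v'\}$ and check its six edges one by one. The edges $vu_k$ and $vu_{k+1}$ belong to the $v$-cone triangulation of $D$ inherited from the $P$-side; the edges $u_kv'$ and $u_{k+1}v'$ belong to the $v'$-cone triangulation inherited from the $P'$-side; and $u_ku_{k+1}$ lies on $\partial D$, hence in both. In each of these cases the edge is an edge of a hyperideal tetrahedron in the subdivision of $P$ or $P'$, which supplies the required corner.

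The one edge that calls for a moment's care is $vv'$, which is typically not an edge of $\partial D$. Here I would use that both $v$ and $v'$ are vertices of $D$, and that each base-cone construction of $D$ includes a spoke from its cone vertex to every other vertex of $D$; in particular $vv'$ lies in the $v$-cone triangulation coming from $P$ and in the $v'$-cone triangulation coming from $P'$, and hence is an edge of a hyperideal tetrahedron on each side. Once this observation is in place the argument reduces to the bookkeeping that the authors summarize by ``no new edges were introduced by the diagonal switches''; that is precisely the design feature of the layered construction of Section \ref{4}, and I expect no real obstacle beyond enumerating the six edges carefully.
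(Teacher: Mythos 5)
Your proof is correct and follows the same idea as the paper, which disposes of the lemma in one sentence by noting that the diagonal switches introduce no new edges, so every edge of a flat tetrahedron is already an edge of the cone triangulation of the shared face on one side and hence of a hyperideal tetrahedron there; your edge-by-edge check (including the diagonal $vv'$, which is a spoke in both cone triangulations) is just a careful spelling-out of that observation.
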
 

\subsection{Existence of angled triangulations}
In this section, we prove the other direction of Theorem \ref{main} by deforming the non-negative dihedral angles $\beta$ into an angle structure. For each edge $e$ in $\mathcal T,$ we let $m(e),$ $n(e)$ and $k(e)$ respectively be the number of $0$-angles, $\pi$-angles and angles in $(0,\pi)$ around $e.$ Let $t>0.$ We assign a number $\alpha_t(e,\sigma)$ to the corner $(e,\sigma)$ as follows. If $\beta(e,\sigma)=0,$ we let $\alpha_t(e,\sigma)=t,$ if $\beta(e,\sigma)=\pi,$ we let $\alpha_t(e,\sigma)=\pi-3t$ and if $\beta(e,\sigma)\in(0,\pi),$ 
we let $\alpha_t(e,\sigma)=\beta(e,\sigma)-\frac{m(e)-3n(e)}{k(e)}t.$ 

\begin{theorem}\label{4.2} If $t$ is sufficiently small, then $\alpha_t$ defines an angle structure on $(M,\mathcal T).$
\end{theorem}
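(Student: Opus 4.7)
The plan is to verify directly the three defining properties of an angle structure — positivity of all dihedral angles, the edge sum condition, and the external face condition — by choosing $t$ small enough. The design of the formula for $\alpha_t$ is tailored so that the edge sums are preserved exactly and the flat external face sums are pushed just below $\pi$.

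First I would check positivity. If $\beta(e,\sigma)=0$ then $\alpha_t(e,\sigma)=t>0$; if $\beta(e,\sigma)=\pi$ then $\alpha_t(e,\sigma)=\pi-3t$, positive as soon as $t<\pi/3$. For the hyperideal case $\beta(e,\sigma)\in(0,\pi)$, observe that $m(e),n(e),k(e)$ are all bounded above by the valence of $e$ and, crucially, $k(e)\ge 1$ by Lemma \ref{k}; hence the coefficient $(m(e)-3n(e))/k(e)$ is uniformly bounded over all corners, so for sufficiently small $t$ the value $\alpha_t(e,\sigma)$ stays in $(0,\pi)$.

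Next I would verify the edge sum. Fix an edge $e$ and split its corners according to the value of $\beta$. The computation
\begin{equation*}
\sum_{\sigma\ni e}\alpha_t(e,\sigma)=m(e)\,t+n(e)(\pi-3t)+\sum_{\beta(e,\sigma)\in(0,\pi)}\!\!\Bigl(\beta(e,\sigma)-\tfrac{m(e)-3n(e)}{k(e)}t\Bigr)
\end{equation*}
collapses, after summing the $k(e)$ hyperideal terms, to $(m(e)-3n(e))t+n(e)\pi+\sum\beta(e,\sigma)-(m(e)-3n(e))t = \sum_{\sigma\ni e}\beta(e,\sigma)=2\pi$. So the edge condition holds exactly, not merely to leading order — this is precisely why the coefficients $1,-3,-(m-3n)/k$ were chosen.

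Finally I would check the external face condition for each tetrahedron $\sigma$ and each external face $F$ with adjacent edges $e_1,e_2,e_3$. If $\sigma$ is hyperideal, then all three $\beta(e_i,\sigma)$ lie in $(0,\pi)$ and satisfy $\sum_i\beta(e_i,\sigma)<\pi$ strictly; the perturbation $\sum_i(\alpha_t-\beta)(e_i,\sigma)$ is $O(t)$ with a constant independent of $t$, so the inequality survives for small $t$. If $\sigma$ is flat, a direct inspection of the flat tetrahedron (the three edges meeting at any vertex of the underlying quadrilateral consist of two edges of the quadrilateral, assigned $\beta=0$, and exactly one of the diagonals $e_5$ or $e_6$, assigned $\beta=\pi$) gives $\sum_i\alpha_t(e_i,\sigma)=t+t+(\pi-3t)=\pi-t<\pi$. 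Taking $t$ smaller than $\pi/3$, smaller than any positivity threshold coming from Step 1, and smaller than the threshold for all finitely many hyperideal external face inequalities, completes the proof.

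The proof is essentially routine once the formula is in hand; the only subtlety is that the definition of $\alpha_t$ on hyperideal corners requires division by $k(e)$, and this is exactly why Lemma \ref{k} — guaranteeing $k(e)\ge 1$ for every edge — is the essential input from the construction of the partially flat triangulation.
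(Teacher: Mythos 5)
Your proof is correct and follows essentially the same route as the paper: a direct verification of positivity, the exact cancellation in the edge sums (using Lemma \ref{k} to ensure $k(e)\geq 1$), and the face inequalities, with $\pi-t<\pi$ for flat tetrahedra and an $O(t)$ perturbation of the strict inequality for hyperideal ones. No gaps.
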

\begin{proof}
When $t$ is sufficiently small, $\alpha_t(e,\sigma)$ is positive for each corner $(e,\sigma).$ By Lemma \ref{k}, $k(e)>0$ for each edge $e,$ hence $\sum_{\sigma\supset e}\alpha_t(e,\sigma)=m(e)t+n(e)(\pi-3t)+\sum_{\beta(e,\sigma)\in(0,\pi)}\big(\beta(e,\sigma)-\frac{m(e)-3n(e)}{k(e)}t\big)=n(e)\pi+\sum_{\beta(e,\sigma)\in(0,\pi)}\beta(e,\sigma)=2\pi.$ Now suppose $e_1,$ $e_2$ and $e_3$ are the three edges of a tetrahedron $\sigma$ adjacent to an external face. If $\sigma$ is flat, then $\sum_{i=1}^3\alpha_t(e_i,\sigma)=t+t+(\pi-3t)=\pi-t<\pi.$ If $\sigma$ is hyperideal, then $\sum_{i=1}^3\alpha_t(e_i,\sigma)=\sum_{i=1}^3\beta(e_i,\sigma)-\sum_{i=1}^3\frac{m(e_i)-3n(e_i)}{k(e_i)}t.$ Since $\sum_{i=1}^3\beta(e_i,\sigma)<\pi,$ $\sum_{i=1}^3\beta(e_i,\sigma)-\sum_{i=1}^3\frac{m(e_i)-3n(e_i)}{k(e_i)}t<\pi$ when $t$ is sufficiently small. Therefore, when $t$ is sufficiently small, $\alpha_t$ satisfies the conditions of an angle structure.
\end{proof}


\bibliographystyle{amsplain}

\noindent
\noindent
Faze Zhang\\
School of Mathematical Sciences, Dalian University of Technology\\
Dalian, Liaoning, 116024, P.R.China\\
(fazez85@mail.dlut.edu.cn)
\\

\noindent
Ruifeng Qiu\\
Department of Mathematics, East China Normal University\\
Shanghai, 200241, P.R.China\\
(rfqiu@math.ecnu.edu.cn)
\\

\noindent
Tian Yang\\
Department of Mathematics, Stanford University\\
Stanford, CA 94305, USA\\
(yangtian@math.stanford.edu)

\end{document}